\theoremstyle{plain}
\newtheorem*{main}{Differentiable \kam Theorem}
\newtheorem*{step}{Step Lemma}
\newtheorem*{iter}{Iterative Lemma}
\let\Ps  \Psi
\let\Th  \Theta
\def\Phh{\hat\Phi}
\def\Psh{\hat\Psi}
\def\Psp{\Psi_\pl}
\def\Yp {Y_\pl}
\def\Pp {P_\pl}
\def\Qn {Q^\pl}
\def\Qp {Q_\pl}
\def\DP {\Dl P}
\def\Pr {\Pi}
\def\Pro{\Pr_0}
\def\Prs{\Pr_1}
\def\no#1{\abs*{#1}_0}
\def\ns#1{\abs*{#1}_s}
\def\nns#1{\nn*{#1}_s}
\def\na#1{\n*{#1}_{\al s}}
\def\nna#1{\nn*{#1}_{\al s}}
\def\nnaa#1{\nn*{#1}_{\al^2s}}
\def\sn {{s_\nu}}
\def\nup{{\nu+1}}
\def\intm{\mkern2mu\mathop\nshortmid\mathchoice{\mkern-14.5mu}{\mkern-12.6mu}{\bullet}{\bullet}\int}
\def\sumk{\sum_{k\in\Zn}}
\def\suml{\sum_{l\in\Zn}}
\def\veee{\,\vee\,}
\def\r   {r}
\def\1{_1}
\let\Newpage\relax
\title   {KAM below $\mathbf C^n$} 
\author  {Jürgen Pöschel}
\date    {Version 1.1, November 2020}
\begin{document}

\maketitle

\begin{abstract}
We consider the \kam theory for rotational flows on an \m{n}-dimen\-sional torus. We show that if its frequencies  are diophantine of type $n-1$, then Moser's \kam theory with parameters applies to small perturbations of weaker regularity than~$C^n$. Derivatives of order $n$ need not be continuous, but rather \m{L^2} in a certain strong sense. This disproves the long standing conjecture that $C^n$ is the minimal regularity assumption for \kam to apply in this setting while still allowing for Herman's $C^{n-\ep}\!$-counterexamples.
\end{abstract}

We consider one of the model problems of \kam theory, namely the perturbation of a constant vectorfield 
\[
  N = \sum_{i=1}^n \om_i \del_i
\]
on an \m{n}-dimensional torus given by $n$ frequencies
\[
  \om = (\om_1,\dots,\om_n).
\]
The flow of this vector field is very simple, being the superposition of uniform rotations on each circle of the torus.

Upon perturbation this simple flow is usually destroyed, and chaotic behaviour may set in. However, it is one of the fundamental results of \kam theory that if the frequencies are strongly nonresonant, and if the perturbation $P$ is sufficiently smooth and sufficiently small, then there exists a modifying constant vector field $Y$ so that $N+P-Y$ is indeed conjugate to~$N$. Or to put it succinctly, \lq\kam applies\rq.

We address the question of how smooth that perturbation has to be.
As is well known the answer depends on the diophantine properties of the frequencies~$\om$.
So we assume that $\om$ is diophantine with exponent $\ta$, that is,
\[
  \n{\ipr{k,\om}} \ge \al\n{k}^{-\ta}, \qq  0\ne k\in\Zn,
\]
with some $\al>0$. 
This holds for almost all frequencies when $\ta>n-1$, and on a non-empty zero set when $\tau=n-1$. 

Now consider a perturbation~$P$ of~$N$. Writing
\[
  P = \sum_{0\ne k\in\Zn} p_ke_k,
  \qq
  e_k=\e^{\i\ipr{k,x}},
\]
we measure the size of $P$ with respect to the norms
\[
  \nn{P}_{r,b}
  \defeq \sum_{\nu\ge0} \pas3{ \sum_{b^{\nu-1}< \n{k}\le b^\nu} \n{p_k}^2 \n{k}^{2r} } ^{1/2}.
\]
where $b\ge2$ is an integer.
These norms interpolate between the standard weighted \m{\ell^1}- and \m{\ell^2}-norms
\[
  \nn{P}_{r,1}
  \defeq \sum_{k\ne0} \n{p_k} \n{k}^{r},
  \qq
  \nn{P}_{r,\iny}
  \defeq \pas3{ \sum_{k\ne0} \n{p_k}^2 \n{k}^{2r} }^{1/2}. 
\] 
Indeed, as one easily verifies,
\mmx
$\nn{P}_{r,1} \ge \nn{P}_{r,b} \searrow \nn{P}_{r,\iny}$ as $b\to\iny$.
\[
  \nn{P}_{r,1} \ge \nn{P}_{r,b} \searrow \nn{P}_{r,\iny},
  \qq
  b\to\iny.
\]

\begin{main}
Suppose the frequencies $\om$ of~$N$ are diophantine with exponent~$\ta$. Then KAM applies to perturbations $P$ of\/~$N$, whenever $\nn{P}_{\tau+1,b}$ is sufficiently small for some integer $b\ge2$.
\end{main}

This result is new and, it seems, unexpected. 
Consider the case of $n$ degrees of freedom and $\ta=n-1$. 
It was conjectured for a long time, and sometimes even stated as fact, that $C^n$ is the minimal regularity requirement for \kam to apply in this case \cite{Alb,Che,Her}. 
This, however, is not true.
For $\nn{P}_{n,b}$ to be small for any $b\ge2$, the \m{n-1}-derivative of $P$ need not be continuous, but only of strong \m{L^2}-type in the sense of this norm.

We point out that this result does not contradict the counterexamples of class $C^{n-\ep}$ given by Herman~\cite{Her}. 
It rather fits precisely into the gap between the class of $C^n$-perturbations, for which \kam was known to apply, and the class of $C^{n-\ep}$-counterexamples of Herman.

The rest of this paper is devoted to the proof of this theorem.
We combine a succession of coordinate transformations familiar in \kam theory with the analytic smoothing approach of Moser~\cite{Mo67}. This way no analytic perturbation theorem as an intermediate step is required.
The convergence speed of this procedure is not of Newton type, but rather slow in the spirit of Rüssmann~\cite{R-09,P-11}. Indeed, at each step of the iteration, we do not linearize the conjugacy equation under consideration, but rather solve a nonlinear equation with the help of the Brouwer fixed point theorem. 


\Newpage

\section{Norms}

The proof of the main theorem makes use of the interplay of two different norms for analytic functions and maps. For a function or map $f$ on the torus with Fourier series expansion $f=\sumk f_ke_k$ and $s\ge0$ we set
\[
  \ns{f} \defeq \sumk \n{f_k} \e^{s\n{k}},
\]
and, with $U_s\colon \n{\Im z}\le s$,
\begin{align*}
  \nns{f}^2
  \defeq \intm_{U_s} \n{f}^2
  &\defeq \frac{1}{\n{U_s}} \int_{U_s} \n{f}^2 \\
  &= \frac{1}{(4\pi s)^n} \int_{[-s,s]^n}\int_{[-\pi,\pi]^n}  \n{f(x+\i y)}^2 \dx\dy.
\end{align*}
This norm also has a representation with particular weights.

\begin{lem} \label{equi}
For $f = \sum_k f_ke_k$,
\[
  \nns{f}^2
  = \sumk \smash[b]{\n{f_k}^2} w_k(2s)
\]
with
\[
  w_k(t) = \smash[t]{\prod_{i=1}^n \frac{\sinh(tk_i)}{tk_i}}
\]
and the convention that $t\inv\sinh t = 1$ for $t=0$.
\end{lem}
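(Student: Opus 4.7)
The plan is to substitute the Fourier expansion $f = \sum_k f_k e_k$ directly into the mean value integral and separate the $x$- and $y$-integrations, exploiting the product structure of the slab $U_s$ modulo translations.

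First I would note that for $z = x + \i y$ with $x \in [-\pi,\pi]^n$ and $y \in [-s,s]^n$,
\[
  e_k(x+\i y) = \e^{\i\ipr{k,x}} \e^{-\ipr{k,y}},
\]
so that
\[
  \n{f(x+\i y)}^2 = \sum_{k,l} f_k \bar f_l\, \e^{\i\ipr{k-l,x}} \e^{-\ipr{k+l,y}}.
\]
Integrating in $x$ over $[-\pi,\pi]^n$ kills all off-diagonal terms by the standard orthogonality of the characters $\e^{\i\ipr{k,x}}$ on the torus, yielding a factor $(2\pi)^n$ and leaving only the diagonal $k=l$:
\[
  \int_{[-\pi,\pi]^n} \n{f(x+\i y)}^2 \dx
  = (2\pi)^n \sumk \n{f_k}^2 \e^{-2\ipr{k,y}}.
\]

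Next I would compute the remaining $y$-integral, which factors over the coordinates:
\[
  \int_{[-s,s]^n} \e^{-2\ipr{k,y}} \dy
  = \prod_{i=1}^n \int_{-s}^{s} \e^{-2k_i y_i} \dy_i
  = \prod_{i=1}^n \frac{\sinh(2sk_i)}{k_i},
\]
with the $i$-th factor equal to $2s$ when $k_i=0$. Writing $\sinh(2sk_i)/k_i = 2s \cdot \sinh(2sk_i)/(2sk_i)$ pulls out a total factor of $(2s)^n$, so that the full double integral becomes $(2\pi)^n(2s)^n \sum_k \n{f_k}^2 w_k(2s) = (4\pi s)^n \sum_k \n{f_k}^2 w_k(2s)$. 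Dividing by $\n{U_s} = (4\pi s)^n$ delivers the claimed identity.

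There is no real obstacle here; the only point requiring minor care is the interchange of sum and integral, which is justified by absolute convergence of $\sum_k \n{f_k}\e^{s\n{k}}$ (i.e.\ finiteness of $\ns{f}$) for $f$ analytic on $U_s$, or alternatively by monotone convergence applied to the nonnegative integrand after expanding $\n{f}^2$. The convention $t^{-1}\sinh t = 1$ at $t=0$ is exactly what reconciles the $k_i=0$ coordinates with the general formula, so no case distinction is needed in the final expression.
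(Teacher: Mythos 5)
Your proof is correct and follows essentially the same route as the paper: expand $\n{f}^2$ as a double Fourier sum, kill the off-diagonal terms by the $x$-integration, and evaluate the $y$-integral coordinate-wise to produce the $\sinh(2sk_i)/(2sk_i)$ weights. Your extra remark on justifying the sum–integral interchange is a harmless addition the paper leaves implicit.
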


\begin{proof}
The general term of $\n{f}^2$ at $z=x+\i y$ is
\[
  f_k\e^{\i\ipr{k,z}} \fb_{\!l}\e^{-\i\ipr{l,\zb}}
  = f_k\fb_{\!l}\, \e^{\i\ipr{k-l,x}} \e^{-\ipr{k+l,y}}.
\]
Its mean integral over $U_s$ vanishes whenever $k\ne l$. For $k=l$ we get
\[
  \frac{1}{(2s)^n} \int_{[-s,s]^n} \n{f_k}^2 \e^{-\ipr{2k,y}}\dy
  = \n{f_k}^2 \prod_{i\colon k_i\ne0} \frac{\sinh(2sk_i)}{2sk_i},
\]
which gives the claimed identity. 
\end{proof}

We note that for constant functions $Z$,
\mmx
$\n{Z} = \no{Z} = \ns{Z} = \nns{Z}$ for all $s\ge0$.
\[
  \n{Z} = \no{Z} = \ns{Z} = \nns{Z}, \qq s\ge0.
\]
Moreover,
\[
  \n{f}_{U_s} \defeq \sup_{U_s} \n{f} \le \ns{f},
\]
so we have
\$
  \nns{fg} \le \ns{f}\nns{g}
$
and in particular
$
  \nns{f}\le\ns{f}
$.
Finally, the exponential weights are submultiplicative, so we have the Banach algebra property
\$
  \ns{fg}\le\ns{f}\ns{g}.
$ 
The usual Neumann series argument then gives us

\begin{lem} \label{inv}
Let $\ph=I+\phh$ be a self map the \m{n}-torus. 
If\/ $\mu = \ns{D\phh}<1$, then $\ph$ is a diffeomorphism, and for its inverse $\ps=I+\psh$ one has
\[
  \ns{D\psh} \le \frac{\mu}{1-\mu},
  \qq
  \ns{D\ps} \le \frac{1}{1-\mu}.
\]
\end{lem}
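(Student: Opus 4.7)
The plan is a standard Neumann-series argument, as the preceding paragraph anticipates, based on the Banach-algebra structure of $\ns{\cdot}$.

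First I would establish that $\ph$ is a global diffeomorphism. The weights satisfy $\e^{s\n{k}}\ge 1$, so the hypothesis $\mu=\ns{D\phh}<1$ forces $\n{D\phh}_{U_s}\le\mu<1$. Lifting $\phh$ periodically to $\R^n$, the equation $\ph(x)=y$ becomes the contractive fixed-point problem $x=y-\phh(x)$, whose unique smooth solution $x=\ps(y)=y+\psh(y)$ depends periodically on $y$ by uniqueness. Thus $\ps=I+\psh$ is a smooth self-map of the torus with $\ph\circ\ps=\ps\circ\ph=I$.

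Treating $\ns{\cdot}$ as a sub-multiplicative norm on matrix-valued analytic functions, the Neumann series
\[
  (I+D\phh)^{-1}=\sum_{j\ge 0}(-D\phh)^j
\]
converges absolutely with $\ns{(I+D\phh)^{-1}}\le 1/(1-\mu)$ and $\ns{(I+D\phh)^{-1}-I}\le\mu/(1-\mu)$. Differentiating $\ph\circ\ps=I$ pointwise yields $D\ps=(I+D\phh\circ\ps)^{-1}$ and $D\psh=D\ps-I$, so the desired bounds at least have the right shape.

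The main obstacle is passing from $\ns{\cdot}$-control of the algebraic inverse $(I+D\phh)^{-1}$, which lives in $x$-coordinates, to the same control for $D\ps=(I+D\phh\circ\ps)^{-1}$, which lives in $y$-coordinates — composition with a near-identity map is not an isometry for $\ns{\cdot}$ in general, so the bounds do not transfer for free. The natural remedy is to construct $\psh$ directly as the limit of the iteration $\psh^{(0)}=0$, $\psh^{(j+1)}=-\phh\circ(I+\psh^{(j)})$, and to prove inductively that $\ns{D\psh^{(j)}}\le\mu+\mu^2+\cdots+\mu^j$. The inductive step requires bounding $\ns{D\phh\circ(I+\psh^{(j)})}$ in terms of $\ns{D\phh}$; this composition estimate, not the Neumann series per se, is the genuine analytical content, and it must exploit that each iterate is only a small perturbation of the identity. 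Passing to the limit gives $\ns{D\psh}\le\mu/(1-\mu)$, and the triangle inequality applied to $D\ps=I+D\psh$ then yields $\ns{D\ps}\le 1/(1-\mu)$.
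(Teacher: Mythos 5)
Your first two paragraphs are fine and are essentially the paper's argument: $\sup_{U_s}\n{D\phh}\le\ns{D\phh}=\mu<1$ makes $\ph$ a diffeomorphism of the torus (contraction for the lifted equation $x=y-\phh(x)$), and the Neumann series in the Banach algebra $\ns\cd$ gives $\ns{(I+D\phh)\inv}\le1/(1-\mu)$ and $\ns{(I+D\phh)\inv-I}\le\mu/(1-\mu)$. The gap is in your last paragraph. The inductive estimate you defer --- bounding $\ns{D\phh\cmp(I+\psh^{(j)})}$ in terms of $\ns{D\phh}$ --- is precisely what this norm does not provide: by Lemma~\ref{comp}, composition with a map satisfying $\ns{\ph-I}\le\sg$ is controlled only by the norm on the \emph{wider} strip, $\n{f}_{s+\sg}$, which is not assumed finite here, and no ``small perturbation of the identity'' refinement recovers the same $s$. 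In fact the inverse map is only guaranteed to be analytic on a strip of width about $(1-\mu)s$ (solve $z=w-\phh(z)$ by contraction there, using $\n{\Im\phh(z)}\le\mu\n{\Im z}$), so a bound for $D\psh$ read in the $y$-variable on the full strip $U_s$ is not available from the hypothesis when $\mu$ is not small, and for $\mu$ close to $1$ it can simply fail. So your iteration cannot close as stated.

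The paper's ``usual Neumann series argument'' sidesteps composition altogether. The quantities actually estimated --- and the only ones used later, namely in $\Th=D\Ps\inv(D\Ps-I)$ and in $\nns{\Ps^*Z}\le\ns{D\Ps\inv}\no{Z}$ --- are the pointwise matrix inverse $(D\ph)\inv=(I+D\phh)\inv=\sum_{j\ge0}(-D\phh)^j$ and $(D\ph)\inv-I$, that is, $D\ps\cmp\ph$ and $D\psh\cmp\ph$ read in the original variable $x$, to which the Banach algebra property applies directly. For these, your second paragraph already contains the complete proof; the transfer to $y$-coordinates that you single out as ``the genuine analytical content'' is neither needed for the rest of the paper nor, in this norm and on the same strip, true in general.
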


A central role is played by the behaviour of these norms under coordinate transformations.

\begin{lem} \label{comp}
For a diffeomorphismus $\ph$ of the \m{n}-torus 
extending to~$U_s$, 
\[
  \nns{f\cmp\ph} \le \lm \nn{f}_{\lm s},
  \qq
  \lm \defeq \n*{D\ph}_{U_s} \veee \n*{D\ph\inv}_{\ph(U_s)} \ge 1,
\]
where $\vee$ denotes the maximum operator.
Moreover, if If\/ $\ns{\ph-I}\le\sg$, then
\[
  \ns{f\cmp\ph} \le \n{f}_{s+\sg}.
\]
\end{lem}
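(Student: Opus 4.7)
Both estimates rest on the same geometric observation: since $\ph$ is a self-map of the $n$-torus, it sends $\mathbb{R}^n$ to $\mathbb{R}^n$, so for $z=x+\i y\in U_s$ we have $\Im\ph(z)=\Im(\ph(z)-\ph(x))$, and the Lipschitz bound of $\ph$ on $U_s$ forces the coordinatewise sup norm of this to be at most $\lm|y|\le\lm s$. Hence $\ph(U_s)\subset U_{\lm s}$. For the second hypothesis, $\n{\ph-I}_{U_s}\le\ns{\ph-I}\le\sg$ yields $\ph(U_s)\subset U_{s+\sg}$ in the same manner.

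For the mean $L^2$ inequality I would apply the real change of variables $w=\ph(z)$ in the $2n$-dimensional integral defining $\nns{f\cmp\ph}^2$. Holomorphicity of $\ph$ identifies its real Jacobian factor with $|\det D\ph|^2$, whose reciprocal is pointwise bounded by $\lm^{2n}$. Combined with $\ph(U_s)\subset U_{\lm s}$ and the volume ratio $|U_{\lm s}|/|U_s|=\lm^n$, this yields the crude estimate $\nns{f\cmp\ph}\le\lm^{3n/2}\nn{f}_{\lm s}$. Obtaining the sharper linear factor $\lm$ claimed in the statement is the main obstacle; I would expect it to require either exploiting the degree-one identity $\intm_{\mathbb{T}^n}\det D\ph=1$ to replace a pointwise Jacobian bound by an averaged one, or using subharmonicity of $|f|^2$ together with a mean-value inequality adapted to the slab $U_s$.

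For the $\ns{\cdot}$ estimate I would expand $f=\sumk f_k e_k$ and write $f\cmp\ph=\sumk f_k e_k\,\e^{\i\ipr{k,\phh}}$ with $\phh=\ph-I$. The Banach algebra property of $\ns{\cdot}$ noted earlier, together with the Taylor series of the exponential, gives $\ns{\e^{\i\ipr{k,\phh}}}\le\e^{|k|\ns{\phh}}\le\e^{|k|\sg}$, while $\ns{e_k}=\e^{s|k|}$. Summing over $k$,
\[
  \ns{f\cmp\ph}\le\sumk|f_k|\,\e^{(s+\sg)|k|}=\n{f}_{s+\sg},
\]
reading $\n{f}_{s+\sg}$ as the Wiener-type norm $\ns{\cdot}$ evaluated at level $s+\sg$. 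This part is routine; the work really lies in the first inequality.
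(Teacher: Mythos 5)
Your second inequality and the inclusion $\ph(U_s)\subset U_{\lm s}$ are fine: the Fourier-series/Banach-algebra argument giving $\ns{f\cmp\ph}\le\n{f}_{s+\sg}$ is exactly the paper's, and your justification of the inclusion (reality of $\ph$ plus the bound on $D\ph$ along vertical segments) is the paper's one-line remark spelled out. The genuine gap is in the first inequality: by the change of variables you only obtain $\nns{f\cmp\ph}\le\lm^{3n/2}\,\nn{f}_{\lm s}$, not the stated bound with the factor $\lm$, and the two devices you float to close the distance -- the degree-one identity for $\det D\ph$ and subharmonicity of $\n{f}^2$ -- are left entirely unexecuted. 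A proof attempt that ends by declaring the claimed constant ``the main obstacle'' has not proved the lemma as stated.

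For comparison, the paper closes this distance essentially by not opening it: its proof is precisely your computation, $\intm_{U_s}\n{f\cmp\ph}^2\le\intm_{\ph(U_s)}\n{f}^2\,\n{D\ph\inv}^2\le\lm^2\intm_{U_{\lm s}}\n{f}^2$, in which the real Jacobian appears as $\n{D\ph\inv}^2$ rather than $\n{\det D\ph\inv}^2$ and the passage from the normalized mean over $\ph(U_s)$ to the normalized mean over the larger slab $U_{\lm s}$ is taken at no cost; the dimension-dependent factors you track (determinant versus operator norm, and the volume ratios involving $\n{U_s}$, $\n{\ph(U_s)}$, $\n{U_{\lm s}}$) are exactly what is suppressed there. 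So your difficulty is real rather than a missed trick, and neither of your suggested refinements is what the paper does. If you cannot recover the factor $\lm$, the workable alternative is to prove and use the weaker bound $\lm^{3n/2}\nn{f}_{\lm s}$: in the Step Lemma the result is only invoked with $\lm\le 3/2$, so this still gives a constant depending only on $n$, at the price of adjusting the explicit numerical constants there and, downstream, in the Iterative Lemma.
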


\begin{proof}
We have $\ph(U_s) \subset U_{\lm s}$ by the reality of $\ph$ and the bound on $D\ph$. Therefore,
\begin{align*}
  \intm_{U_s} \n{f\cmp\ph}^2
  &\le \intm_{\ph(U_s)} \n{f}^2\n*{D\ph\inv}^2 \\
  &\le \n*{D\ph\inv}_{\ph(U_s)}^2 \intm_{\ph(U_s)} \n{f}^2 
  \le \lm^2 \intm_{U_{\lm s}} \n{f}^2,
\end{align*}
proving the first claim. Writing $\ph=I+\phh$ we have
$e_k\cmp\ph = e_k\cd e_k\cmp\phh$
and thus
\mmxx
\begin{align*}
  \ns{e_k\cmp\ph}
  &\le \ns{e_k}\ns{e_k\cmp\phh} \\
  &\le \ns{e_k}\exp(\ns{\ipr{k,\phh}})
  \le \e^{s\n{k}}\e^{\sg\n{k}}
  = \e^{(s+\sg)\n{k}}.
\end{align*}
\[
  \ns{e_k\cmp\ph}
  \le \ns{e_k}\ns{e_k\cmp\phh}
  \le \ns{e_k}\exp(\ns{\ipr{k,\phh}})
  = \e^{(s+\sg)\n{k}}.
\]
For general $f$ we thus obtain
\[
  \ns{f\cmp\ph}
  \le \sumk \n{f_k}\ns{e_k\cmp\ph}
  \le \sumk \n{f_k} \e^{(s+\sg)\n{k}}
  = \n{f}_{s+\sg}.
  \qed
\]
\end{proof}

We also have a Cauchy inequality connecting the two norms $\ns\cd$ and $\nns\cd$.

\begin{lem} \label{cauchy}
For compatible maps $f$ and $\ph$ on the \m{n}-torus,
\[
  \nna{\Df\cd\ph} \le \frac{1}{\e\al^{n/2}} \frac{1}{(1-\al)s} \nns{f}\ns{\ph}, \qq 0<\al<1.
\]
\end{lem}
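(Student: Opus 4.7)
The plan is to reduce the claim to a pointwise estimate on the weights $w_k$ from Lemma~\ref{equi}. Starting from the pointwise Cauchy--Schwarz $|Df(x)\cd\ph(x)|\le|Df(x)|\,|\ph(x)|$ and the bound $\sup_{U_{\al s}}|\ph|\le\na{\ph}\le\ns{\ph}$, one obtains
\[
  \nna{Df\cd\ph} \;\le\; \ns{\ph}\cdot\nna{|Df|}.
\]
Summing the Parseval identity of Lemma~\ref{equi} over the components of $Df$ gives
\[
  \nna{|Df|}^2 \;=\; \sum_j\nna{\partial_j f}^2 \;=\; \sum_k|k|^2\,|f_k|^2\,w_k(2\al s),
\]
with $|k|^2=\sum_j k_j^2$, so the task is to bound this sum by a multiple of $\nns{f}^2=\sum_k|f_k|^2\,w_k(2s)$.

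The decisive pointwise estimate is
\[
  |k|^2\,\frac{w_k(2\al s)}{w_k(2s)} \;\le\; \frac{1}{e^2\al^n(1-\al)^2 s^2}, \qq k\in\Zn.
\]
The ratio factorises as $\prod_i\sinh(2\al sk_i)/(\al\sinh(2sk_i))$, each factor being~$1$ when $k_i=0$, so the whole estimate reduces to the one-variable inequality
\[
  \sinh(\al y) \;\le\; e^{-(1-\al)y}\,\sinh(y), \qq y\ge0,\ \al\in[0,1],
\]
which cross-multiplies to the trivial $e^{(\al-1)y}\le e^{(1-\al)y}$. Applied at the at most $n$ nonzero coordinates of $k$, this yields $w_k(2\al s)/w_k(2s)\le \al^{-n}e^{-2(1-\al)s|k|_1}$, and the calculus maximum $x^2e^{-2(1-\al)sx}\le 1/(e(1-\al)s)^2$ (attained at $x=1/((1-\al)s)$), applied with $x=|k|_1\ge|k|$, completes the pointwise bound. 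Summing against $|f_k|^2$ and taking the square root finishes the proof.

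The main obstacle is the sinh-ratio inequality: it is where the dimensional factor $\al^{-n/2}$ is born, as one $\al^{-1/2}$ per complex direction, and it encodes in a single elementary line the Bergman--Cauchy loss for holomorphic functions on a polystrip. Everything else --- the pointwise vector Cauchy--Schwarz, the coordinatewise aggregation of Lemma~\ref{equi}, and the polynomial-exponential maximum --- is routine.
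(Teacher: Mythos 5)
Your proof is correct and gives exactly the stated constant, but it reaches it by a route that differs from the paper's in how the product $\Df\cd\ph$ is handled. The paper stays on the Fourier side: it expands $\Df\cd\ph$ as a convolution, applies the triangle (Minkowski) inequality in the $l$-sum, and invokes the weight submultiplicativity $w_k(s)\le w_{k-l}(s)\e^{s\n{l}}$ to pull out $\ns{\ph}$; only then does it estimate $\sup_k \n{k}\,(w_k(2\al s)/w_k(2s))^{1/2}$ by the same sinh-ratio inequality and the same maximization of $t\e^{(\al-1)st}$ that you use. You instead eliminate $\ph$ at the outset by pointwise Cauchy--Schwarz together with $\sup_{U_{\al s}}\n{\ph}\le\na{\ph}\le\ns{\ph}$ --- in effect the paper's own remark $\nns{fg}\le\ns{f}\nns{g}$, applied at level $\al s$ and componentwise (this tacitly uses that the exponent $\n{k}$ in $\ns{\cd}$ is the $\ell^1$-norm of $k$, exactly as the paper's own inequality $\n{f}_{U_s}\le\ns{f}$ does). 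What remains is the Bernstein-type bound $\bigl(\sum_k \n{k}_2^2\n{f_k}^2 w_k(2\al s)\bigr)^{1/2}\le \frac{1}{\e\al^{n/2}(1-\al)s}\,\nns{f}$, which you obtain from Lemma~\ref{equi}, the ratio bound $w_k(2\al s)/w_k(2s)\le\al^{-n}\e^{-2(1-\al)s\n{k}_1}$ (the same estimate the paper uses at this point and again in Lemma~\ref{cut}), the comparison $\n{k}_2\le\n{k}_1$, and the elementary maximum of $x^2\e^{-2(1-\al)sx}$; all steps check, including the cross-multiplied sinh inequality. The upshot is that your argument avoids the convolution and the weight submultiplicativity entirely, so it is shorter and in fact yields the marginally stronger statement with $\sup_{U_{\al s}}\n{\ph}$ in place of $\ns{\ph}$, while the paper's convolution argument is the variant one would need if $\ph$ were controlled only through weighted Fourier data rather than through a sup bound. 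For the application in the Step Lemma either form suffices.
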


\begin{proof}
It is easy to verify that
$\dfrac{\sinh t}{t}\e^{-t}$ is decreasing for all~$t$.
Hence,
\[
  \frac{\sinh s}{s} \le \frac{\sinh(s-t)}{s-t} \e^t, \qq 0\le t\le s.
\]
It follows that for all $k$ and~$l$ we have the submultiplicativity property
\[
  •ewkl
  w_k(s) \le w_{k-l}(s)\e^{s\n{l}}, \qq s\ge0,
\]
connecting the weights of $\nns\cd$ and $\ns\cd$.

Now consider the Fourier series expansion
\[
  \frac1\i \Df\cd\ph
  = \sumk \ipr{k,\ph}f_ke_k 
  = \sum_{k,l\in\Zn} \ipr{k-l,\ph_l}f_{k-l}e_l.
\]
By the triangle inequality,
\begin{align*}
  \nna{\Df\cd\ph}
  &\le \pas3{ \sumk \n2{\suml \ipr{k-l,\ph_l}f_{k-l}}^2 w_k(2\al s) }^{1/2} \\
  &\le \suml \pas3{ \sumk \n{k-l}^2\n{\ph_l}^2\n{f_{k-l}}^2 w_k(2\al s) }^{1/2}.
\end{align*}
With inequality~\eqref{ewkl} and a subsequent re-indexing it follows that
\begin{align*}
  \nna{\Df\cd\ph}
  &\le \suml \n{\ph_l}\e^{\al s\n{l}} \pas3{ \sumk \n{k}^2\n{f_k}^2w_k(2\al s) }^{1/2} \\
  &\le \sup_{k\in\Zn} \n{k} \pas{\frac{w_k(2\al s)}{w_k(2s)}}^{1/2} \nns{f}\ns{\ph}. 
\end{align*}
Now, with $\dfrac{\sinh s}{\sinh t} \le \e^{s-t}$ for $s\le t$ we get
\[
  •e-ww
  \frac{w_k(s)}{w_k(t)}
  = \pas{\frac st}^n \prod_{1\le i\le n} \frac{\sinh(sk_i)}{\sinh(tk_i)}
  \le \pas{\frac st}^n \e^{(s-t)\n{k}},
\]
so the supremum is bounded by
\[
  \sup_{t\ge0} \frac{t}{\al^{n/2}}\e^{(\al-1)st}
  = \frac1{s\al^{n/2}} \sup_{t\ge0} r\e^{(\al-1)r}
  = \frac{1}{\e\al^{n/2}} \frac{1}{(1-\al)s}.
\]
This proves the claim.
\end{proof}

\section{Small Divisors and Cut Offs}  \label{s-sd}

Next we consider the solution of the typical small divisor equation
\mmx
$\del_\om\phi=f$
\[
  \del_\om\phi
  = \om_1\del_1\phi+\ldots+\om_n\del_n\phi = f
\]
with a nonresonant frequency vector~$\om$. Formally, for $f=\sum_{k\ne0} f_ke_k$, the unique solution with mean value zero is
\[
  \phi = Lf \defeq \sum_{k\ne0} \frac{f_k}{\i{\ipr{k,\om}}} e_k.
\]

\begin{lem} \label{sd}
Suppose that
\$
  \Om \defeq \max_{0<\n{k}\le K} \n{\ipr{k,\om}}\inv < \iny
$.
Then, for a tri\-go\-no\-metric polynomial $f$ of order $K$ without constant term, 
\[
  \ns{Lf}  \le C\Om \nns{f},
  \qq
  C = 2^n\e^{sK}.
\]
\end{lem}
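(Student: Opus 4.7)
\textit{Proof plan.} The approach is a two-stage reduction. First, I pull out the small divisors as a uniform constant on the Fourier support of $f$: since $f_k=0$ outside $0<\n k\le K$ and $\n{\ipr{k,\om}}\inv\le\Om$ on that range,
\[
  \ns{Lf} = \sum_{0<\n k\le K} \frac{\n{f_k}}{\n{\ipr{k,\om}}}\e^{s\n k}
  \le \Om \sum_{0<\n k\le K}\n{f_k}\e^{s\n k}.
\]
Then I compare the weighted $\ell^1$-sum on the right with $\nns f$ via Cauchy--Schwarz. Pairing $\n{f_k}\sqrt{w_k(2s)}$ against $\e^{s\n k}/\sqrt{w_k(2s)}$ and invoking the representation of $\nns f$ from Lemma~\ref{equi} gives
\[
  \sum_{0<\n k\le K}\n{f_k}\e^{s\n k}
  \le \nns f\cdot \Bigl(\sum_{0<\n k\le K}\frac{\e^{2s\n k}}{w_k(2s)}\Bigr)^{1/2},
\]
so the lemma reduces to the geometric bound $\sum_{0<\n k\le K}\e^{2s\n k}/w_k(2s)\le 4^n\e^{2sK}$.

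For this geometric sum I would exploit the product structure of both numerator and denominator: $\e^{2s\n k}=\prod_i\e^{2s\n{k_i}}$ and $w_k(2s)=\prod_i\sinh(2sk_i)/(2sk_i)$, so that each summand factors as $\prod_{k_i\ne 0} 4s\n{k_i}/(1-\e^{-4s\n{k_i}})$. The elementary inequality $t/(1-\e^{-t})\le \e^{t/2}$, which rearranges to $\sinh(t/2)\ge t/2$, bounds each factor by $\e^{2s\n{k_i}}$, so the Fourier constraint $\sum_i\n{k_i}\le K$ yields the termwise estimate $\e^{2s\n k}/w_k(2s)\le \e^{2sK}$. The dimensional constant $4^n$ then has to be extracted from a careful lattice-point count that exploits the same $\ell^1$ constraint through the multiplicative form of the summands.

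The main obstacle is precisely this counting step. A crude bound on $\#\{k\in\Zn\colon\n k\le K\}$ is polynomial in $K$, so simply multiplying the termwise estimate $\e^{2sK}$ by the number of terms will not produce a $K$-independent constant $4^n$. The delicate point is to evaluate the sum on $S$ coordinate by coordinate, taking advantage of its factorization and of the $\ell^1$ constraint tying the different $\n{k_i}$ together, so that the $K$-dependence is absorbed into the single exponential $\e^{2sK}$ while only the per-coordinate constants survive. Once this is in hand, taking square roots closes the proof with $C=2^n\e^{sK}$.
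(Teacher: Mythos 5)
Your reduction is set up cleanly, but the geometric bound you defer to the end --- $\sum_{0<\n{k}\le K}\e^{2s\n{k}}/w_k(2s)\le 4^n\e^{2sK}$ --- is not a missing counting lemma: it is false precisely in the regime where Lemma~\ref{sd} is applied. Since $t/(1-\e^{-t})\ge1$ for all $t>0$, every summand $\prod_{k_i\ne0}4s\n{k_i}/(1-\e^{-4s\n{k_i}})$ is at least $1$, so the sum is at least the number of lattice points with $0<\n{k}\le K$ (already the axis points give $\ge 2nK$, and the full count grows like $K^n$). In the iteration one has $s_\nu K_\nu=r$ fixed while $K_\nu\to\iny$, so the right-hand side $4^n\e^{2sK}=4^n\e^{2r}$ stays bounded and the inequality fails for large $K$; for instance with $n=1$, $s=1/K$, the sum is of order $K$ while $4\e^{2}$ is a constant. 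Worse, the failure is structural, not a matter of cleverer counting: once you replace every $\n{\ipr{k,\om}}\inv$ by $\Om$ in the first line, Cauchy--Schwarz is sharp, so the best constant your route can produce is exactly $\Om\bigl(\sum_{0<\n{k}\le K}\e^{2s\n{k}}/w_k(2s)\bigr)^{1/2}$, which is at least of order $\Om K^{n/2}$. That extra $K^{n/2}$ would raise $\Dl_\nu$ from $K_\nu^{\ta+1}$ to $K_\nu^{\ta+1+n/2}$ in the iteration, i.e.\ cost $n/2$ derivatives and destroy the point of the theorem.

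The ingredient you are missing is R\"ussmann's small-divisor estimate: the divisors must stay inside the Cauchy--Schwarz, because only few $k$ can make $\ipr{k,\om}$ small simultaneously. The paper's proof first uses the crude bound $\ns{Lf}\le\e^{sK}\n{Lf}_0$ (harmless, since $\e^{sK}=\e^r$ is a fixed constant in the application), then applies Cauchy--Schwarz pairing $\n{\ipr{k,\om}}\inv$ against $\n{f_k}$, and then invokes the $K$-independent estimate $\sum_{0<\n{k}\le K}\ipr{k,\om}^{-2}\le2^{n+2}\Om^2$ of R\"ussmann~\cite{R-76}, together with $w_k(2s)\ge1$, i.e.\ $\nn{f}_0\le\nns{f}$. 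That is where the dimensional constant $2^n$ (rather than a power of $K$) comes from; without some such substitute for the divisor-sum estimate, your argument cannot reach the stated constant $C=2^n\e^{sK}$.
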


\begin{proof}
With Cauchy-Schwarz,
\[
  \n{Lf}_0^2
  \le \sum_{0<\n{k}\le K} \frac{1}{\ipr{k,\om}^2} \, \sum_{0<\n{k}\le K} \n{f_k}^2.
\]
By an elegant estimate due to Rüssmann~\cite{R-76},
\[
  \sum_{0<\n{k}\le K} \frac{1}{\ipr{k,\om}^2}
  \le 2^{n+2}\Om^2,
\]
while the second sum is just $\nn{f}_0^2$.
Since the exponential weights in $\ns{Lf}$ are all bounded by $\e^{sK}$ and the weights in $\nns{f}$ are all $\ge1$ we obtain
\[
  \ns{Lf}
  \le \e^{sK} \n{Lf}_0 
  \le C\Om \nn{f}_0
  \le C \Om \nns{f}.
  \qed
\]
\end{proof}

\goodbreak

We also need an estimate for ultra-violent cut offs.

\begin{lem} \label{cut}
Suppose $f$ contains no Fourier coefficients up to order~$K$. Then
\[
  \nna{f} \le \al^{-n/2} \e^{(\al-1)sK} \nns{f}, \qq 0<\al\le1.
\]
\end{lem}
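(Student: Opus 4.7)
The plan is to convert everything into the weight representation provided by Lemma~\ref{equi} and thereby reduce the inequality to a pointwise comparison of $w_k(2\al s)$ and $w_k(2s)$ valid uniformly for $\n{k}>K$. First, Lemma~\ref{equi} gives
\[
  \nna{f}^2 = \sumk \n{f_k}^2 w_k(2\al s), \qq \nns{f}^2 = \sumk \n{f_k}^2 w_k(2s),
\]
and by hypothesis only modes with $\n{k}>K$ contribute to either sum.

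The core of the argument is essentially the weight-ratio inequality~\eqref{e-ww} already derived in the proof of Lemma~\ref{cauchy}: applying $\sinh(as)/\sinh(s)\le \e^{(a-1)s}$ coordinate-wise in each non-vanishing $k_i$ and collecting the $t\inv\sinh t$ prefactors yields, for $0<\al\le1$,
\[
  \frac{w_k(2\al s)}{w_k(2s)} \le \al^{-n} \e^{2(\al-1)s\n{k}}.
\]
Since $\al\le1$ makes the exponent non-positive and $\n{k}>K$, this is further bounded by $\al^{-n}\e^{2(\al-1)sK}$, which is independent of~$k$.

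Inserting this bound into the sum, factoring the constant out, and taking square roots then produces the claimed inequality. The only point requiring care is the bookkeeping of the $1/\al^n$ prefactor: one has to track that the $t\inv\sinh t$ rescaling blows up rather than vanishes as $\al\to0$, so that after taking square roots we land on $\al^{-n/2}$ rather than $\al^{n/2}$. Beyond this sign tracking, the lemma is a one-line consequence of Lemma~\ref{equi} together with the monotonicity of the exponential factor in~$\n{k}$.
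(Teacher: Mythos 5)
Your proof is correct and follows the paper's own route: pass to the weight representation of Lemma~\ref{equi}, bound the ratio $w_k(2\al s)/w_k(2s)$ coordinate-wise via the monotonicity of $\e^{-t}\sinh t$ exactly as in~\eqref{e-ww}, use $\n{k}\ge K$ with the non-positive exponent, and take square roots. Your bookkeeping of the prefactor as $\al^{-n}$ (hence $\al^{-n/2}$ after the square root) is the right one; the paper's display~\eqref{e-ww} writes $(s/t)^n$ where it should be $(t/s)^n$, so you have in fact tracked this constant more carefully than the printed text.
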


\begin{proof}
For
$
  f = \sum_{\n{k}\ge K} f_ke_k
$ 
we get
\[
  \nna{f}^2
  = \sum_{\n{k}\ge K} \n{f_k}^2 w_k(2\al s)
  \le \sup_{\n{k}\ge K} \frac{w_k(2\al s)}{w_k(2s)} \nns{f}^2.
\]
By the same argument as in~\eqref{e-ww}
\[
  \sup_{\n{k}\ge K} \frac{w_k(2\al s)}{w_k(2s)} \le \al^{-{n/2}} \e^{(\al-1)sK}.
  \qed
\]
\end{proof}


\Newpage

\section{Outline of Proof and Step Lemma}  

Suppose we already found a modifying term $Y$ and a coordinate transformation $\Ps$ so that
\[
  \Ps^*(N+P-Y) = N+Q,
\]
where $P$ is an analytic approximation – indeed a trigonometric polynomial – to the original smooth perturbation we are aiming at, and $Q$ is smaller than~$P$.
We then construct another modifying term $Z$ and a transformation $\Ph$ so that
\[
  •eQZ
  \Ph^*(N+Q-\Ps^*Z) = N+\Qn
\]
improves on~$Q$. Setting $\Psp = \Ps\cmp\Ph$ and $\Yp=Y+Z$ we obtain
\begin{align*}
  \Psp^*(N+P-\Yp)
  &= \Ph^*(\Ps^*(N+P-Y) - \Ps^*Z) \\
  &= \Ph^*(N+Q-\Ps^*Z) \\
  &\eqdef N + \Qn.
\end{align*}
Passing from $P$ to the next approximation $\Pp$ we arrive at
\mmxx
\begin{align*}
  \Psp^*(N+\Pp-\Yp)
  &= N + \Qn + \Psp^*(\Pp-P) \\
  &= N + \Qp,
\end{align*}
\[
  \Psp^*(N+\Pp-\Yp)
  = N + \Qn + \Psp^*(\Pp-P)
  \eqdef N + \Qp,
\]
which completes one cycle of the iterative procedure.
The following lemma describes the quantitative details of this construction.

\goodbreak

\begin{step}
Consider $\Ps^*(N+P)=N+Q$. Assume that
\[
  4\Dl\nns{Q} \le \frac14, \qq 
  \ns{D\Ps-I} \le \frac17,
\]
where $\Dl = CK\Om$ with $C = 2^n\e^{sK}$ and
\$
  \Om = \max_{0<\n{k}\le K} \n{\ipr{k,\om}}\inv
$.
Also assume that $sK\ge (4/3)^{n/2}$.
Then there exists a unique modifying term $Z$ and a unique coordinate transformation $\Ph=I+\Phh$ with
\[
  \Dl\no{Z} \veee K\nn*{\Phh}_{s/2} \le 4\Dl\nns{Q},
\]
so that
$
  \Ph^*\Ps^*(N+P-Z) = N+\Qn
$
with
\[
  \nn*{\Qn}_{s/4} \le 4\nn*{(I-\Pr)Q}_{s/2},
\]
where $\Pi$ denotes truncation of Fourier series at order~$K$.
\end{step}


\begin{proof}
Consider equation \eqref{eQZ} which with $\Ph=I+\Phh$ is equivalent to
\[
  •e1
  D\Phh\cd N + D\Ph\cd \Qn = (Q-\Ps^*Z)\cmp\Ph.
\]
Instead of this functional equation we solve the finite dimensional equation
\[
  •e2
  D\Phh\cd N = \Pr((Q-\Ps^*Z)\cmp\Ph),
\]
where $\Pr$ denotes truncation of a Fourier series at order~$K$. Writing
\[
  \Ps^*Z = Z-\Th Z, \qq \Th = D\Ps\inv(D\Ps-I),
\]
its right hand side becomes
$\Pr((Q-\Ps^*Z)\cmp\Ph) = \Pr((Q+\Th Z)\cmp\Ph) - Z$
so that this equation amounts to
\[
  •e3
  D\Phh\cd N + Z
  = \Pr\, T(Z,\Phh)
\]
with the nonlinear operator
\[
  T(Z,\Phh) \defeq (Q+\Th Z)\cmp(I+\Phh).
\]

Here, $D\Phh\cd N$ 
is the familiar linear differential operator $\del_\om\Phh$ giving rise to small divisors. 
Its inverse on the space of trigonometric polynomials of order~$K$ with vanishing mean value is the operator $L$ considered in Lemma~\ref{sd}.
Thus, a solution of equation~\eqref{e3} is a fixed point of the mapping
\[
  •e4
  Z_1 = \Pro T(Z,\Phh), 
  \qq
  \Phh\1 = L\,\Prs T(Z,\Phh),
\]
where $\Pro$ denotes the operator of taking the mean value over the \m{n}-torus and $\Prs=\Pr-\Pro$.
This we solve with the Banach contraction principle.

To this end let $\al=1/2$ for brevity and consider the ball
\[
  \Bs:\q\Dl\no{Z} \veee K\na{\Phh} \le 4\Dl\nns{Q} \le \frac14
\]
within the space of constant terms times trigonometric polynomials of order~$K$ without constant terms.
By assumption on $\Ps$ and Lemma~\ref{inv}, 
\[
  \ns{\Th}
  = \ns{D\Ps\inv}\ns{D\Ps-I}
  \le \frac{1/7}{1-1/7}
  = \frac16.
\]
Hence, $\nns{\Th Z} \le \ns{\Th Z} \le \ns{\Th}\no{Z} \le \nns{Q}$ and thus
\[
  \nns{Q+\Th Z} \le 2\nns{Q}.
\]
Similarly, $\na{D\Phh} \le K\na{\Phh} \le 1/4$ and therefore
\[
  •edph
  \na{D\Ph} \veee \na{D\Ph\inv} \le \frac32.
\]
So with Lemma~\ref{comp}
\mmxx
\begin{align*}
  \nna{T(Z,\Phh)}
  &= \nna{(Q+\Th Z)\cmp(I+\Phh)} \\
  &\le 2\nns{Q+\Th Z} 
  \le 4\nns{Q}.
\end{align*}
\[
  \nna{T(Z,\Phh)}
  = \nna{(Q+\Th Z)\cmp(I+\Phh)}
  \le 2\nns{Q+\Th Z} 
  \le 4\nns{Q}.
\]
It follows that 
\begin{align*}
  \Dl\no{Z_1} &= \Dl\no{\Pro T(Z,\Phh)} \le 4\Dl\nns{Q}, \\
  K\na{\Phh_1} &= K\na{L\,\Prs T(Z,\Phh)} \le CK\Om \nna{T(Z,\Phh)} \le 4\Dl \nns{Q}.
\end{align*}
So our ball $\Bs$ is mapped into itself, and continuously so. 
So the Brouwer fixed point theorem applies.

Indeed, this map is a contraction. Consider
\begin{align*}
  T(Z,\Phh)-T(Z',\Phh')
  &= (Q+\Th Z)\cmp\Ph-(Q+\Th Z)\cmp\Ph' \\
  &\qq + (\Th(Z-Z'))\cmp\Ph' \\
  &\eqdef A+B.
\end{align*}
In view of ~\eqref{edph} any map in $\Bs$ maps $U_{\al s}$ into $U_{s-\sg}$ with $\sg=s/4$. So we can apply the Cauchy estimate of Lemma~\ref{cauchy} -- here with $\al=3/4$ -- to $A$ to obtain
\begin{align*}
  \nna{A} 
  &\le \pas{\frac43}^{n/2} \frac 2s \nns{Q+\Th Z} \na{\Phh-\Phh'} \\
  &\le \pas{\frac43}^{n/2} \frac4{sK} \nns{Q}\,K\!\na{\Phh-\Phh'}
  \le 4\nns{Q}\,K\!\na{\Phh-\Phh'}
\end{align*}
by assuming that $sK\ge (4/3)^n$. Similarly, by Lemma~\ref{comp}
\[
  \nna{B}
  = \nna{(\Th(Z-Z'))\cmp\Ph'}
  \le \frac32\nns{\Th(Z-Z')}
  \le \frac14 \no{Z-Z'}.
\]
Since we assume that $4\Dl\nns{Q}\le1/4$ it follows that
\mmxx
\begin{align*}
  K\na{\Phh\1-\Phh\1'} 
  &\le \Dl\nna{A}+\Dl\nna{B} \\
  &\le \frac14\,K\!\na{\Phh-\Phh'} + \frac14\,\Dl\no{Z-Z'}.
\end{align*}  
\[
  K\na{\Phh\1-\Phh\1'} 
  \le \Dl\nna{A}+\Dl\nna{B}
  \le \frac14\,K\!\na{\Phh-\Phh'} + \frac14\,\Dl\no{Z-Z'}.
\]
Exactly the same estimate holds for $\Dl\no{Z\1-Z\1'}$. But this means that with respect to the norm
\[
  \Dl\no{Z-Z'} \veee K\na{\Phh-\Phh'}
\]
we obtain a contraction by the factor $1/2$.

The estimates for the unique fixed point $Z,\Phh$ are the same as for the ball~$\Bs$.
So it remains to bound $\Qn$. From \eqref{e1} and \eqref{e2} we deduce that
\[
  D\Ph\cd\Qn  = (I-\Pr)((Q-\Ps^*Z)\cmp\Ph).
\]
With 
$
  \nns{\Psi^*Z}
  \le \ns{D\Psi\inv}\no{Z}
  \le \nns{Q}
$,
estimate \eqref{edph} and Lemma~\ref{cut} we arrive at
\begin{align*}
  \nnaa{\Qn}
  &\le \na{D\Ph\inv\cmp\Ph} \nnaa{(I-\Pr)((Q-\Ps^*Z)\cmp\Ph)} \\
  &\le 2 \nna{(I-\Pr)(Q-\Ps^*Z)} \\
  &\le 4 \nna{(I-\Pr)Q}.
\end{align*}  
This finishes the proof of the Step Lemma.
\end{proof}

\Newpage

\section{Iteration}

We now assume the frequency vector $\om$ of the vector field $N$ to be diophantine with exponent~$\ta$. 
Scaling time, we may even assume that
\[
  \n{\ipr{k,\om}} \ge \n{k}^{-\ta}, \qq k\ne0.
\]
We recall that $\ta\ge n-1$, since otherwise no such frequencies exist.

To simplify the exposition we now assume with loss of generality that $\nn{P}_{r,b}$ is small with $b\ge4$.
For $\nu\ge0$ we set
\[
  K_\nu = b^\nu, \qq 
  s_\nu = \frac{\r}{b^\nu}, \qq
  \Dl_\nu = K_\nu^{\ta+1} = b^{\nu(\ta+1)}.
\]
Thus, 
\mmx
$s_\nu K_\nu=r$ for all $\nu\ge0$,
\[
  s_\nu K_\nu=r, \qq \nu\ge0,
\]
and we may simplify the estimates of section~\ref{s-sd}
\begin{align*}
  K_\nu\n{Lf}_\sn \veee \n{DLf}_\sn &\le 2^{n}\e^r \, \Dl_\nu \nn{f}_\sn,
  \\
  \nn{F}_{s_{\nu/2}} &\le 2^{n/2}\e^{-r/2} \, \nn{F}_\sn,
\end{align*}
for $F$ containing no Fourier coefficients up to order~$K_\nu$.
We then fix $r$ so that 
\[
  \th \defeq 4^n\e^{-r/2} \le \frac{1}{2\cd b^{\ta+1}}.
\]

We approximate the given perturbation 
$
  P = \sum_{k\ne0} p_ke_k
$ 
by the sequence of trigonometric polynomials 
\[
  P_\nu = \sum_{\n{k}\le K_\nu} p_ke_k, \qq \nu\ge0.
\]
To make the exposition more transparent with respect to the norm of $P$ we introduce the weighted \m{L^2}-norm
\[
  \nn{P}_m^2 = \sum_{k\ne0} \n{p_k}^2m_k^2.
\]

\begin{lem}
For $\DP_0=P_0$ and $\DP_v = P_\nu-P_{\nu-1}$ for $\nu\ge1$ we have
\[
  \nn{\DP_\nu}_\sn \le \frac{\e^r}{m_\nu} \nn{\DP_\nu}_m, \qq \nu\ge0,
\]
with $m_0=1$ and
\mmx
$m_\nu = \min_{\n{k}>K_{\nu-1}} m_k$ for $\nu\ge1$.
\[
  m_\nu = \min_{\n{k}>K_{\nu-1}} m_k, \q \nu\ge1.
\]
\end{lem}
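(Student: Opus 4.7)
\medskip

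The plan is to use Lemma~\ref{equi} to expand $\nns{\DP_\nu}^2$ in terms of the Fourier coefficients with the weights $w_k(2s_\nu)$, estimate these weights pointwise by a universal constant using the Fourier support of $\DP_\nu$, and then extract the weighted \m{L^2}-norm $\nn{\cdot}_m$ by bounding each $m_k$ from below by $m_\nu$ on the relevant range of $k$.

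First I will record the elementary inequality $\sinh(x)/x \le \e^{\n{x}}$ for all real~$x$, which follows at once from $\sinh(x)/x = \cosh(\xi)$ with $\xi\in(0,x)$ by the mean value theorem, or simply from $\cosh(\n x) \le \e^{\n{x}}$. Taking products over the coordinates this yields the pointwise bound
\[
  w_k(t) = \prod_{i=1}^n \frac{\sinh(tk_i)}{tk_i} \le \e^{t\n{k}}, \qq t\ge0,
\]
where $\n{k}=\sum_i\n{k_i}$ is the $\ell^1$-norm consistent with the weights appearing in $\ns{\cd}$.

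Next I will exploit the Fourier localization. By construction $\DP_\nu$ is supported on frequencies with $\n{k}\le K_\nu$, and additionally on $\n{k}>K_{\nu-1}$ when $\nu\ge1$. Since $s_\nu K_\nu=r$, the weight estimate above gives $w_k(2s_\nu)\le \e^{2s_\nu \n{k}} \le \e^{2r}$ for every $k$ in the support of $\DP_\nu$. Invoking Lemma~\ref{equi} I obtain
\[
  \nns{\DP_\nu}^2
  = \sum_{k\in\supp \DP_\nu}\n{p_k}^2 w_k(2s_\nu)
  \le \e^{2r}\sum_{k\in\supp \DP_\nu}\n{p_k}^2.
\]

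Finally I will recover the weighted norm. By the very definition $m_\nu$ is the minimum of $m_k$ over the support of $\DP_\nu$ (using $m_0=1$ and the convention that $m_k\ge 1$ in the base case $\nu=0$, so that $m_k\ge m_\nu$ on the support of $\DP_\nu$ for every~$\nu\ge0$). Consequently
\[
  \sum_{k\in\supp \DP_\nu}\n{p_k}^2
  \le \frac{1}{m_\nu^2}\sum_{k\in\supp \DP_\nu}\n{p_k}^2 m_k^2
  \le \frac{1}{m_\nu^2}\nn{\DP_\nu}_m^2,
\]
and combining with the previous bound gives $\nns{\DP_\nu}\le (\e^r/m_\nu)\nn{\DP_\nu}_m$, as claimed. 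There is no real obstacle: the only nontrivial input is the pointwise estimate on the weights~$w_k$, which is an elementary consequence of the mean value theorem; everything else is a bookkeeping of the support of $\DP_\nu$ together with the identity $s_\nu K_\nu=r$.
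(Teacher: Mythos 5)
Your proof is correct and follows essentially the same route as the paper: expand $\nns{\DP_\nu}^2$ via the weights $w_k$, use $w_k(2s_\nu)\le\e^{2s_\nu\n{k}}\le\e^{2r}$ on the Fourier support of $\DP_\nu$ together with $s_\nu K_\nu=r$, and bound $m_k\ge m_\nu$ there to extract $\nn{\DP_\nu}_m$. The paper merely compresses these steps into a single maximum over the annulus $K_{\nu-1}<\n{k}\le K_\nu$, so there is nothing substantive to add.
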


\begin{proof} \let\Lm\Ks
With $w_k(s) \le \e^{s\n{k}}$ and
$\Lm_\nu \colon K_{\nu-1}<\n{k}\le K_\nu$ we get
\begin{align*}
  \nn{\DP_\nu}_\sn^2
  &= \sum_{k\in\Lm_\nu} \n{p_k}^2 w_k(2\sn) \\[-2pt]
  &\le \max_{k\in\Lm_\nu} \frac{\e^{2\sn\n{k}}}{m_k^2} \sum_{k\in\Lm_\nu} \n{p_k}^2 m_k^2 
  \le \frac{\e^{2r}}{m_\nu^2} \nn{\DP_\nu}_m^2.
  \qed
\end{align*}
\end{proof}

\goodbreak[6]

\begin{iter}
Suppose that
\mmx
$\displaystyle\en \smash{\sup_{\nu\ge0} \frac{\Dl_\nu}{m_\nu}} \le A < \iny \en$ 
\[
  \sup_{\nu\ge0} \frac{\Dl_\nu}{m_\nu} \le A < \iny,
\]
and that
\[
  \ep = \sum_{\nu\ge0} \nn{\Dl P_\nu}_{m}
\]
is sufficiently small.
Then for each $P_\nu$ there exists a modifying term $Y_\nu$ and a transformation $\Ps_\nu$ such that
\[
  \Ps_\nu^*(N+P_\nu-Y_\nu) = N + Q_\nu
\]
with
\begin{align*}
  \nn{Q_\nu}_\sn 
  &\le \ep_\nu \defeq B \sum_{\mu\le \nu} \frac{\th^{\nu-\mu}}{m_\mu} \nn*{\DP_\mu}_m,
  \\
  \n*{D\Psh_\nu}_\sn 
  &\le \dl_\nu \defeq \prod_{0\le \mu<\nu} (1+4\Dl_\nu \ep_\nu) - 1,
\end{align*}
where $B=2\e^r$.
Moreover, $\en\n{Y_{\nu+1}-Y_\nu} \le  4\ep_\nu$ and
$
  \n{D\Ps_\nup-D\Ps_\nu}_0 \le 8\Dl_\nu\ep_\nu
$.
\end{iter}

\begin{proof} 
For $\nu=0$ we can choose $Y_0=0$ and $\Ps_0=I$. Then $Q_0=P_0$, and the estimate for $Q_0$ is satisfied by the preceding lemma by choice of~$B$.
So we may proceed by induction.

By our choice of~$\th$, the sequence $\th^\nu\!\Dl_\nu$ decays geometrically so that
\[
  \sum_{\nu\ge\mu} \th^\nu\! \Dl_\nu
  \le 2\cd\th^\mu \Dl_\mu.
\]
Therefore, with the abbreviation $\rh_\mu \defeq \nn*{\DP_\mu}_m$ and $\Dl_\nu/m_\nu\le A$,
\begin{align*}
  \sum_{\nu\ge0} \Dl_\nu\ep_\nu
  = B \sum_{\mu\ge0} \frac{\th^{-\mu}}{m_\mu}\rh_\mu  \sum_{\nu\ge\mu} \th^\nu\!\Dl_\nu 
  &\le 2B \sum_{\mu\ge0} \frac{\Dl_\mu}{m_\mu} \rh_\mu \\
  &\le 2AB \sum_{\mu\ge0} \rh_\mu
   = 2AB\,\ep.
\end{align*}
It follows that for $\ep$ small enough, the smallness conditions of the Step Lemma are satisfied by $\Ps_\nu$ and $Q_\nu$ for all $\nu\ge0$.

We obtain a modifying term $Z_\nu$ and a transformation $\Ph_\nu$ with
\[
  •edph
  \no{Z_\nu} \le 4\ep_\nu,
  \qq
  K_\nu \nn*{\Phh_\nu}_{s_\nu/2} \veee \nn*{D\Phh_\nu}_{s_\nu/2} \le 4\Dl_\nu\ep_\nu,
\]
With $Y_\nup = Y_\nu+Z_\nu$ and $\Ps_\nup = \Ps_\nu\cmp\Ph_\nu$ we have
\[
  \Ps_\nup^*(N+P_\nup-Y_\nup) = N + \Qn_\nu + \Ps_\nup^*\Dl P_\nup.
\]
For $\Qn_\nu$ we have
\[
  \nn*{\Qn_\nu}_{s_\nu/4}
  \le 4 \nn*{(I-\Pi)Q_\nu}_{s_\nu/2}
  \le 4^n \e^{-r/2} \nn{Q_\nu}_\sn.
\]
For the other term we have
\[
  \nn*{\Ps_\nup^*\Dl P_\nup}_{s_\nu/4}
  \le 2 \nn*{\Dl P_\nup}_{s_\nu/2}
  \le \frac{2\e^r}{m_{\nup}} \nn{\Dl P_\nup}_m.
\]
Putting both estimates together and taking into account the definitins of $\th$ and $B$ as well as the fact that $s_{\nu+1} \le s_\nu/4$ we arrive at
\begin{align*}
  \nn{Q_\nup}_{s_\nup}
  &\le \nn*{\Qn_\nu}_{s_\nu/4} + \nn*{\Ps_\nup^*\Dl P_\nup}_{s_\nu/4} \vphantom{\frac{.}{m_\nu}}\\
  &\le \th \nn{Q_\nu}_\sn + \frac{B}{m_{\nup}} \nn{\Dl P_\nup}_m\\
  &\le B \sum_{\mu\le \nu} \frac{\th^{\nup-\mu}}{m_\mu} \rh_\mu
        + \frac{B}{m_\nup} \rh_\nup 
  = B\! \sum_{\mu\le \nup} \frac{\th^{\nup-\mu}}{m_\mu} \rh_\mu 
\end{align*}
as required.
---
Finally, consider $\Ps_\nup = I + \Psh_\nup$. We have
\[
  D\Psh_\nup = D\Psh_\nu\cmp\Ph_\nu + D\Ps_\nu\cmp\Ph_\nu\cd D\Phh_\nu.
\]
So in view of $\na{\Phh_\nu} \le \al s_\nu$ by estimate \eqref{edph} we have
\[
  \n*{D\Psh_\nup}_{s_\nup}
  \le \n*{D\Psh_\nu}_\sn + \n*{D\Ps_\nu}_\sn \n*{D\Phh_\nu}_{s_\nup}. 
\]
Setting $\et_\nu = 4\Dl_\nu \ep_\nu$ we thus get
\begin{align*}
  \n*{D\Psh_\nup}_{s_\nup}
  &\le \dl_\nu + (1+\dl_\nu)\et_\nu 
  \\
  &= (1+\dl_\nu)(1+\et_\nu) - 1 
   = \prod_{0\le\mu<\nup} (1+\et_\mu) - 1
   = \dl_\nup
\end{align*}
as required in the Iterative Lemma.
Moreover,
\[
  D\Ps_\nup-D\Ps_\nu
  = D\Ps_\nu\cmp\Ph_\nu-D\Ps_\nu + D\Ps_\nu\cmp\Ph_\nu \cd D\Phh_\nu.
\]
It is not difficult to see that
\[
  \n*{D^2\Ps_\nu}_0 
  \le \frac{4}{s_\nu} \n{D\Ps_\nu}_\sn
  \le \frac{8}{s_\nu}
\]
for $\ep$ sufficiently small. As $s_\nu K_\nu=r$ is sufficiently large, we conclude that
\mmxx
\begin{align*}
  \n{D\Ps_\nu\cmp\Ph_\nu-D\Ps_\nu}_0
  &\le \n*{D^2\Ps_\nu}_0 \n*{\Phh_\nu}_0 \\
  &\le \frac{8}{s_\nu} \cd \frac{4\Dl_\nu\ep_\nu}{K_\nu} 
  \le 4\Dl_\nu\ep_\nu.
\end{align*}
\[
  \n{D\Ps_\nu\cmp\Ph_\nu-D\Ps_\nu}_0
  \le \n*{D^2\Ps_\nu}_0 \n*{\Phh_\nu}_0 
  \le \frac{8}{s_\nu} \, \frac{4\Dl_\nu\ep_\nu}{K_\nu} 
  \le 4\Dl_\nu\ep_\nu.
\]
Similary, $\n*{D\Ps_\nu\cmp\Ph_\nu \cd D\Phh_\nu}_0 \le 4\Dl_\nu\ep_\nu$.
This completes the proof of the Iterative Lemma.
\end{proof}

\section{Convergence}

The convergence of the scheme described in the Iterative Lemma is obvious.
The modyfing terms $Y_\nu$ have a limit $Y$, the transformations $\Ps_\nu$ have a limit $\Ps$ in the \m{C^1}-norm, and $Q_\nu$ vanishes with respect to $\nn\cd_0$.
Hence, passing to the limit in equation
\[
  \Ps_\nu^*(N+P_\nu-Y_\nu) = N + Q_\nu
\]
we obtain
\[
  \Ps^*(N+P-Y) = N
\]
as promised.

\Newpage

\end{document}